\documentclass[12pt]{amsart}
\usepackage{amsmath, amssymb, amsthm}
\newtheorem{lem}{Lemma}

\newtheorem{thm}{Theorem}

\begin{document}

\title{Configurations in Fractals}
\author{Nikolaos Chatzikonstantinou}

\begin{abstract}
  We define the manifold of configurations to be the quotient set of $k$ points in Euclidean space identified under congruence, and prove that compact subsets of $\mathbb{R}^d, d \geq 2$, of large Hausdorff dimension have a non-null set of configurations in them. Our method simplifies previous work in \cite{chatzikonstantinou2017rigidity} and achieves a better dimensional threshold.
\end{abstract}

\maketitle

Let $d\geq 2, k \geq 3$ and consider the Euclidean group $E(d)$ acting on $k$ points $x_1,\dots,x_k$ of $\mathbb{R}^d$ by the diagonal action
\begin{align*}
  g\cdot (x_1, \dots, x_k) = (gx_1, \dots, gx_k), \quad g \in E(d).
\end{align*}
We define the configuration space as the quotient space
\begin{align}
  \mathcal{C}^d_k & := \oplus^k\mathbb{R}^d/E(d) \label{configuration}\\
        & \cong \oplus^{k-1}\mathbb{R}^{d}/O(d). \label{equivalence}
\end{align}
(For (\ref{equivalence}) we set $x_1=0$.) The configuration space is a smooth manifold with singularities: If $k \leq d$ we have a diffeomorphism
\begin{align}
  \mathcal{C}^d_k \cong \mathcal{C}^{k-1}_k. \label{reduction}
\end{align}
Assuming $k \geq d + 1$, the set of $x\in\oplus^{k-1}\mathbb{R}^d$ with maximal rank is open and there $O(d)$ acts smoothly, freely and properly. By Theorem 21.10 of \cite{lee}, that part of $\mathcal{C}^d_k$ is a smooth manifold of dimension
\begin{align}
  \label{dimension}
  m := dk - \frac{d(d+1)}2.  
\end{align}
The residue set may be identified with $\mathcal{C}^{d-1}_k$ and together with the base case $\mathcal{C}^1_k$, which is an open cone over $\mathbb{RP}^{k-2}$, we have determined the structure of $\mathcal{C}^d_k$.

The standard Riemannian metric of $\oplus^k\mathbb{R}^d$ makes $\mathcal{C}^d_k$ into a Riemannian manifold whose distance function is given by the invariant function
\begin{align}
  d(x,y) := \min_{g\in E(d)}\|x - g \cdot y\|_{\oplus^k\mathbb{R}^d}, \quad x,y\in \oplus^k\mathbb{R}^d, \label{distancefunction}
\end{align}
see \cite{Alekseevsky}, Proposition 3.1 and its proof.

The question which is answered in this paper is the following: Given $X\subset \mathbb{R}^d$ compact, does there exist $s > 0$ such that $X^k/E(d)$ is not a null set when the Hausdorff dimension of $X$ exceeds $s$?

We prove this for
\begin{align}\label{range}
  s =
  \begin{cases}
    d - \frac{d-1}k, & k \geq d + 1, \\
    d - \frac{k-2}k, & 3 \leq k \leq d.
  \end{cases}
\end{align}
Proving the case $k \geq d + 1$ is enough, considering (\ref{reduction}), since for the case $k \leq d$ we can find a $(k-1)$-dimensional affine plane that intersects $X$ and apply the result there, see Theorem 6.6 in \cite{mattila}. The case $3 \leq k \leq d$ was first worked out in \cite{bochen}, and better results are obtained there for that range of $k$.
\begin{thm}
  Let $d\geq 2, k \geq d+1$ and $X\subset\mathbb{R}^d$ compact. If the Hausdorff dimension of $X$ exceeds $s$ (see (\ref{range})) then $X^k/E(d)$ is not a null set of $\mathcal{C}^d_k$.
\end{thm}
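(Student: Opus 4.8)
The plan is to realise the configuration set as the support of an explicit measure and to prove that this measure is absolutely continuous with an $L^2$ density. First I would invoke Frostman's lemma to fix a probability measure $\mu$ carried by $X$ with $\mu(B(x,r))\le Cr^\alpha$ for some exponent $s<\alpha<\dim_H X$, and push the product measure $\mu^{\otimes k}$ forward under the quotient projection $\Phi\colon X^k\to\mathcal{C}^d_k$ to obtain $\nu:=\Phi_*\mu^{\otimes k}$. Since $\nu$ is a probability measure carried by $\Phi(X^k)=X^k/E(d)$, it suffices to show that $\nu$ is absolutely continuous with respect to the Riemannian volume of $\mathcal{C}^d_k$: such a $\nu$ cannot charge a null set, so $\Phi(X^k)$ must have positive volume. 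By the standard $L^2$ density criterion this reduces to the uniform bound $I(\delta):=(\mu^{\otimes k}\times\mu^{\otimes k})\{(x,y):d(\Phi x,\Phi y)\le\delta\}\lesssim\delta^{m}$ as $\delta\to0$, with $m$ as in (\ref{dimension}); indeed $\liminf_{\delta\to0}\delta^{-m}I(\delta)<\infty$ forces $\nu\in L^2$ by Fatou.

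Second, I would turn $I(\delta)$ into an integral over the group. Because the metric on $\mathcal{C}^d_k$ is the invariant distance (\ref{distancefunction}), the event $d(\Phi x,\Phi y)\le\delta$ is exactly $\min_{g\in E(d)}\|x-g\cdot y\|\le\delta$. A soft tube estimate, namely that if the minimum is at most $\delta$ then every $g$ within group-distance $c\delta$ of the minimiser still satisfies $\|x-g\cdot y\|\le 2\delta$ and such $g$ fill a Haar set of measure $\gtrsim\delta^{D}$ with $D=\dim E(d)=\tfrac{d(d+1)}2$, yields
\[
  I(\delta)\ \lesssim\ \delta^{-D}\int_{E(d)}\Big(\iint_{X^k\times X^k}\mathbf{1}\big[\|x-g\cdot y\|\le 2\delta\big]\,d\mu^{\otimes k}(x)\,d\mu^{\otimes k}(y)\Big)\,dg .
\]
Since $m+D=dk$, the goal $I(\delta)\lesssim\delta^m$ becomes the clean statement that this group integral is $\lesssim\delta^{dk}=\|\mathbf{1}_{B(0,2\delta)}\|_{L^1(\oplus^k\mathbb{R}^d)}$.

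Third comes the harmonic analysis, which is the heart of the matter. Expanding $\mathbf{1}_{B(0,2\delta)}$ by Fourier inversion and writing $g=(R,\tau)\in O(d)\ltimes\mathbb{R}^d$, the integral over the translations $\tau$ produces a Dirac mass forcing $\sum_{i=1}^k\xi_i=0$ on the frequency variables, which is precisely the translation reduction recorded in (\ref{equivalence}), while the integral over $R\in O(d)$ produces the rotational average $M(\xi)=\int_{O(d)}\prod_{i=1}^k\widehat\mu(R^{\mathsf{T}}\xi_i)\,dR$. After these reductions the desired bound is equivalent to the convergence of the frequency integral $\int_{\{\sum_i\xi_i=0\}}\big(\prod_{i=1}^k|\widehat\mu(\xi_i)|\big)\,|M(\xi)|\,d\sigma(\xi)<\infty$ over the subspace $\{\sum_i\xi_i=0\}$ of dimension $d(k-1)$, the frequency cutoff $|\xi|\lesssim1/\delta$ being harmless once the full integral is finite.

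The main obstacle is exactly this last estimate. A pointwise Frostman bound on $\widehat\mu$ is too weak, so the decay must be extracted from the rotational average $M(\xi)$, that is, from the sharp $L^2$ spherical averages of $\widehat\mu$ coupled to the $\alpha$-energy of $\mu$; I would run a dyadic analysis over the shells $|\xi_i|\sim\rho$ and balance the decay of these averages against the volume $\rho^{d(k-1)}$ of the constraint subspace, and it is this balancing that pins the threshold at $\alpha>d-\tfrac{d-1}k$. Where possible I would symmetrise the two copies of $\mu$ by Cauchy--Schwarz in $R$, using the $O(d)$-invariance of $\{\sum_i\xi_i=0\}$, to reduce $M$ to a quadratic expression. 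Finally, the degenerate (low-rank) configurations, which form the residue set $\cong\mathcal{C}^{d-1}_k$, cause no trouble: since $s>d-1$ the hypothesis $\alpha>s$ already guarantees that $\mu^{\otimes k}$ assigns no mass to affinely degenerate $k$-tuples, so $\nu$ is carried by the smooth stratum and absolute continuity there is all that is required.
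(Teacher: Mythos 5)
Up to the point where the harmonic analysis begins, your argument coincides with the paper's. Your Fatou/$L^2$-density criterion is the mollification argument (\ref{liminf})--(\ref{preaction}); your tube estimate over $E(d)$ is Lemma \ref{lem:epsset} (stated there for the rotational part acting on the differences $x_j-x_1$, which is equivalent once the translations are integrated out); and your frequency integral is the Plancherel dual of the paper's quantity (\ref{nurho}). Indeed, writing $M(\xi)=\int_{O(d)}\prod_{i=1}^k\widehat{\mu}(R^{\mathsf{T}}\xi_i)\,dR$ and letting $\nu_\rho$ be the law of $z_0-\rho y_0$, one has $\widehat{\nu_\rho}(\xi)=\widehat{\mu}(\xi)\overline{\widehat{\mu}(\rho^{\mathsf{T}}\xi)}$, hence
\begin{align*}
  \int_{O(d)}\int_{\mathbb{R}^d}\nu_\rho^k(u)\,du\,d\rho
  = \int_{\{\sum_i\xi_i=0\}}\prod_{i=1}^k\widehat{\mu}(\xi_i)\,\overline{M(\xi)}\,d\sigma(\xi),
\end{align*}
which is your integral without the absolute values. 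So your reduction is correct and lands on the same object as the paper's.

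The gap is the estimate you yourself call the heart of the matter: finiteness of that frequency integral for $\alpha>d-\frac{d-1}{k}$. The paper does not prove this either --- it is Lemma \ref{lem:nurho}, imported from \cite{bochen}, \S 3 --- and that lemma carries the entire quantitative content of the theorem: it is exactly where the exponent $d-\frac{d-1}{k}$ is produced. Your outline replaces the citation with a plan (dyadic shells, spherical averages, ``balancing''), which is a direction rather than a proof, and the one concrete device you do propose is provably a dead end. Cauchy--Schwarz in $R$ (Jensen, so that $|M(\xi)|^2\leq\int_{O(d)}\prod_i|\widehat{\mu}(R^{\mathsf{T}}\xi_i)|^2\,dR$), followed by Cauchy--Schwarz on $\Sigma=\{\sum_i\xi_i=0\}$ and the invariance of $\Sigma$ and of $\sigma$ under the diagonal $O(d)$-action, yields
\begin{align*}
  \int_{\Sigma}\prod_{i=1}^k|\widehat{\mu}(\xi_i)|\,|M(\xi)|\,d\sigma(\xi)
  \leq \int_{\Sigma}\prod_{i=1}^k|\widehat{\mu}(\xi_i)|^2\,d\sigma(\xi),
\end{align*}
and the rotational average has disappeared. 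The right-hand side is (up to a constant) the squared $L^2$-norm of the pushforward of $\mu^{\otimes k}$ under the translation-only quotient $(x_1,\dots,x_k)\mapsto(x_2-x_1,\dots,x_k-x_1)$, so its finiteness would force $X-X$ to have positive Lebesgue measure. But there exist compact sets of full Hausdorff dimension $d$ (products of arithmetically structured Cantor sets) whose difference set is Lebesgue null; for every measure on such a set the right-hand side is $+\infty$, even though these sets satisfy the hypothesis of the theorem. Hence no symmetrization that erases the rotations can reach any threshold below $d$: the decay must be extracted from the $O(d)$-average itself, i.e., from curvature, and doing so at the exponent $d-\frac{d-1}{k}$ is precisely the content of the cited lemma. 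To complete your proof you must either reproduce the argument of \cite{bochen}, \S 3, or cite it; as written, the proposal assumes the theorem's core estimate.
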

\begin{proof}
  Let $\mu$ be a Borel probability measure on $X$ and let $\nu$ be the probability measure on $\mathcal{C}^d_k$ defined as the pushforward of $\mu^k$ by the quotient projection of (\ref{configuration}). Let $\epsilon>0$ and define $\nu_\epsilon(x)$ for $x\in \mathcal{C}^d_k$ by
  \begin{align}
    \nu_\epsilon(x) := \epsilon^{-m}\int \chi_{[0,1]}(d(x,y)/\epsilon)d\nu(y).
  \end{align}
  For $f \in C_c(\mathcal{C}^d_k)$ and $\epsilon > 0$ small we have
  \begin{align*}
    \int f(x)d\nu_\epsilon(x) & = \int \int_{B(y, \epsilon)} \epsilon^{-m}f(x) dx d\nu(y)
  \end{align*}
  which gives, for some positive $h\in C(\mathcal{C}^d_k)$ depending on the metric of $\mathcal{C}^d_k$, the weak-* convergence $\nu_\epsilon \to h\nu$ as $\epsilon \to 0$, so
  \begin{align}
    \|\nu\|_{L^2(\mathcal{C}^d_k)} \leq C\cdot \liminf_{\epsilon\to 0}\|\nu_\epsilon\|_{L^2(\mathcal{C}^d_k)}, \quad C = \|h\|_{L^\infty(\operatorname{spt}\nu)}. \label{liminf}
  \end{align}
  We now wish to bound the right hand side of (\ref{liminf}). By the triangle inequality,
  \begin{align}
    \int_{\mathcal{C}^d_k} \nu_\epsilon(x)^2 dx & = \epsilon^{-2m}\int_{\mathcal{C}^d_k}\iint \chi_{[0,1]}(d(x,y)/\epsilon) \chi_{[0,1]}(d(x,z)/\epsilon) d\mu^k(y)d\mu^k(z)dx \nonumber \\
                                      & \leq C\epsilon^{-m} \iint \chi_{[0,1]}(d(y,z)/\epsilon) d\mu^k(y)d\mu^k(z), \quad C > 0. \label{preaction}
  \end{align}
  Using Lemma \ref{lem:epsset} together with (\ref{dimension}) we may we may bound (\ref{preaction}) by
  \begin{align}
    C\epsilon^{-d(k-1)} \int_{O(d)}\iint \prod_{1 < j \leq k}\chi\{|(z_1-z_j)-\rho(y_1-y_j)| < C'\epsilon\} d\mu^k(y)d\mu^k(z) d\rho. \label{semifinal}
  \end{align}
  We now reveal a convolution in (\ref{semifinal}). Consider the measure $\nu_\rho, \rho \in O(d)$ whose action on a measurable function $f : \mathbb{R}^d\to \mathbb{R}$ is given by
  \begin{align}
    \int f(u) d\nu_\rho(u) := \int_{\mathbb{R}^d}\int_{\mathbb{R}^d} f(z_0 - \rho y_0) d\mu(z_0) d\mu(y_0).
  \end{align}
  Then (\ref{semifinal}) may be rewritten as
  \begin{align} \label{notquite}
    C\int_{O(d)} \int\prod_{1 < j \leq k} \epsilon^{-d}\int \chi\{|u_1 - u_j| < C'\epsilon\} d\nu_\rho(u_j)d\nu_\rho(u_1)d\rho, \quad C' > 0.
  \end{align}
  As long as $\nu_\rho$ is absolutely continuous for almost every $\rho\in O(d)$, (\ref{notquite}) converges, as $\epsilon\to 0$, to a constant multiple of
  \begin{align}
    \int_{O(d)}\int_{\mathbb{R}^d} \nu_\rho^k(u)dud\rho < +\infty \label{nurho} 
  \end{align}
  which Lemma \ref{lem:nurho} shows to be finite for $s > d - \frac{d-1}k$.

  Combining (\ref{liminf}) and Lemma \ref{lem:nurho} shows that $\nu \in L^2(\mathcal{C}^d_k)$, which implies that its support is not a null set.
\end{proof}
\section{Group actions}
\begin{lem}
  \label{lem:epsset}
  Let $d \geq 2, k \geq 3$. If $z,y\in \oplus^{k}\mathbb{R}^d$ with $\|y\|\leq M$ for some $M>0$ and $d(z,y) < \epsilon$ for $\epsilon > 0$ small enough, then there exists some $C_M>0$ that depends on $M$ for which the set
  \begin{align}
    \{ \rho \in O(d) : |(z_1-z_j) - \rho(y_1 - y_j)| < C_M\epsilon, \quad 1 < j \leq k\} \label{epsset}
  \end{align}
  has Haar measure at least $\epsilon^{\frac{d(d-1)}2}$. 
\end{lem}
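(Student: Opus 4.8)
The plan is to exhibit, inside the set (\ref{epsset}), an entire ball around a single approximate solution $\rho_0\in O(d)$, and then to estimate the Haar measure of that ball from below using the fact that $O(d)$ is a compact Lie group of dimension $d(d-1)/2$.

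First I would extract $\rho_0$ from the hypothesis $d(z,y)<\epsilon$. By the definition (\ref{distancefunction}) of the distance there is an isometry $g\in E(d)$, which we write as $g\cdot y_i=\rho_0 y_i+b$ with $\rho_0\in O(d)$ and $b\in\mathbb{R}^d$, such that $\|z-g\cdot y\|_{\oplus^k\mathbb{R}^d}<\epsilon$; in particular $|z_i-\rho_0 y_i-b|<\epsilon$ for each $i$. Subtracting the $i=j$ estimate from the $i=1$ estimate cancels the translation $b$, so for every $1<j\le k$
\begin{align*}
  |(z_1-z_j)-\rho_0(y_1-y_j)| = |(z_1-\rho_0 y_1-b)-(z_j-\rho_0 y_j-b)| < 2\epsilon .
\end{align*}
Thus $\rho_0$ already satisfies (\ref{epsset}) with the constant $2$ in place of $C_M$.

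Next I would promote this single point to a ball. If $\rho\in O(d)$ satisfies $\|\rho-\rho_0\|_{\mathrm{op}}<r$, then since $\|y\|\le M$ forces $|y_1-y_j|\le 2M$, the triangle inequality gives
\begin{align*}
  |(z_1-z_j)-\rho(y_1-y_j)| \le |(z_1-z_j)-\rho_0(y_1-y_j)| + \|\rho-\rho_0\|_{\mathrm{op}}\,|y_1-y_j| < 2\epsilon + 2Mr .
\end{align*}
Hence the metric ball $\{\rho:\|\rho-\rho_0\|_{\mathrm{op}}<r\}$ is contained in (\ref{epsset}) as soon as $2\epsilon+2Mr\le C_M\epsilon$, i.e. for $r=C'\epsilon$ once $C_M\ge 2+2MC'$; here $C'$ is a dimensional constant to be fixed in the last step, so $C_M$ depends only on $M$ and $d$.

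Finally I would bound the Haar measure of this ball. Since $O(d)$ is a compact Lie group of dimension $d(d-1)/2$ and the ball of small radius $r$ about $\rho_0$ stays in the component of $\rho_0$, its normalized Haar measure is comparable, in a neighborhood of $\rho_0$, to Lebesgue measure in geodesic normal coordinates; thus there is a dimensional constant $c_d>0$ with $\operatorname{Haar}(B(\rho_0,r))\ge c_d r^{d(d-1)/2}$ for all small $r$. Taking $r=C'\epsilon$ and choosing $C'$ so that $c_d (C')^{d(d-1)/2}\ge 1$ yields that the Haar measure of (\ref{epsset}) is at least $\epsilon^{d(d-1)/2}$, as claimed. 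I expect the only delicate point to be this last one: the small-ball volume estimate on $O(d)$ is standard, but tracking the constant so that the bound comes out as exactly $\epsilon^{d(d-1)/2}$ rather than merely a dimensional multiple of it is what forces the freedom in the choice of $C_M$.
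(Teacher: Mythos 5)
Your proof is correct and follows essentially the same route as the paper's: extract the rotational part $\rho_0$ of the minimizing isometry, cancel the translation by differencing coordinates to get $|(z_1-z_j)-\rho_0(y_1-y_j)|<2\epsilon$, pass to a ball of radius comparable to $\epsilon$ about $\rho_0$ via the bound $2Mr$, and invoke $\dim O(d)=\frac{d(d-1)}2$ for the volume estimate. If anything, you are more careful than the paper at the last step, where it simply asserts ``the conclusion follows'': you explicitly choose the radius constant $C'$ so that the small-ball Haar bound yields exactly $\epsilon^{\frac{d(d-1)}2}$, with the enlargement cost absorbed into $C_M$.
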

\begin{proof}
  Let $g_0\in E(d)$ so that $d(z,y) = \|z-g_0\cdot y\|$. Let $\rho_0\in O(d)$  be the rotational component of $g_0$. For $\rho$ close to $\rho_0$ we have
  \begin{align*}
    |(z_1-z_j) - \rho (y_1-y_j)| & \leq |(z_1-z_j) - \rho_0(y_1-y_j)| + |\rho_0(y_1-y_j) - \rho (y_1-y_j)| \\
                     & \leq 2\epsilon + 2C\|y\|\epsilon.
  \end{align*}
  This implies that an $\epsilon$-neighborhood of $\rho_0$ satisfies (\ref{epsset}), and since $\dim O(d) = {\frac {d(d-1)}2}$ the conclusion follows.
\end{proof}
The constant $C$ depends on the metric chosen for $O(d)$. The set (\ref{epsset}) may have large volume, for example it is the full group if $y=z=0$.
\begin{lem}
  \label{lem:nurho}
  Let $d \geq 2, k \geq 3$. If the Hausdorff dimension of $X$ is greater than $d - \frac{d-1}k$ then there exist some measure $\mu$ supported on $X$ for which
  \begin{align} \label{convolution}
    \int_{O(d)}\int_{\mathbb{R}^d} \nu_\rho^k(u)du d\rho < +\infty.
  \end{align}
\end{lem}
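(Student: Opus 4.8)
The plan is to choose $\mu$ by Frostman's lemma and, through the Fourier transform, reduce (\ref{convolution}) to a Mattila-type spherical-average integral. Since $\dim_H X>d-\frac{d-1}{k}$, I would fix $\alpha\in\left(d-\frac{d-1}{k},\dim_H X\right)$ and, by Frostman's lemma, a Borel probability measure $\mu$ on $X$ with $\mu(B(x,r))\le C r^\alpha$ for all $x$ and all $r>0$; note that $\alpha>d-1$. The measure $\nu_\rho$ of (\ref{convolution}) is the convolution $\mu*\sigma_\rho$, where $\sigma_\rho$ is the push-forward of $\mu$ under $y\mapsto-\rho y$, so that $\widehat{\nu_\rho}(\xi)=\widehat{\mu}(\xi)\,\overline{\widehat{\mu}(\rho^{-1}\xi)}$. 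Expanding $\int_{\mathbb{R}^d}\nu_\rho(u)^k\,du$ on the Fourier side as an integral over the hyperplane $\{\xi_1+\dots+\xi_k=0\}$ and then integrating in $\rho$, I would arrive at
\begin{align*}
  \int_{O(d)}\int_{\mathbb{R}^d}\nu_\rho^k\,du\,d\rho=\int_{\xi_1+\dots+\xi_k=0}\prod_{j=1}^k\widehat{\mu}(\xi_j)\left(\int_{O(d)}\prod_{j=1}^k\overline{\widehat{\mu}(\rho^{-1}\xi_j)}\,d\rho\right)d\lambda(\xi),
\end{align*}
where $\lambda$ is Lebesgue measure on the hyperplane.

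The next step is to pass to moduli and decouple the inner rotational average by Hölder's inequality with exponent $k$, bounding it by $\prod_{j}\Phi_k(|\xi_j|)^{1/k}$, where $\Phi_k(r):=\int_{O(d)}|\widehat\mu(\rho^{-1}\xi)|^k\,d\rho=c_d\int_{S^{d-1}}|\widehat\mu(r\omega)|^k\,d\omega$. Since $|\widehat\mu|\le1$ and $k\ge2$, I would use the trivial but crucial bound $\Phi_k(r)\le c_d\,\sigma(r)$, with $\sigma(r):=\int_{S^{d-1}}|\widehat\mu(r\omega)|^2\,d\omega$ the $L^2$ spherical average. Setting $g(\xi):=|\widehat\mu(\xi)|\,\sigma(|\xi|)^{1/k}$, the whole expression is then dominated by the $k$-fold convolution $g^{*k}(0)$, which Young's inequality bounds by $\|g\|_{L^{k/(k-1)}(\mathbb{R}^d)}^k$. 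A further application of Hölder's inequality on each sphere (with exponent $\tfrac{2(k-1)}{k}\ge1$) reduces everything to the finiteness of the Mattila-type integral
\begin{align*}
  \int_0^\infty \sigma(r)^{q}\,r^{d-1}\,dr,\qquad q=\frac{k+2}{2(k-1)}.
\end{align*}

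It remains to show this integral is finite for $\alpha>d-\frac{d-1}{k}$, and this is the crux of Lemma \ref{lem:nurho}. Here I would invoke the sharp $L^2$ spherical-average decay estimates for Fourier transforms of $\alpha$-dimensional measures (as developed in \cite{mattila}); the bookkeeping of exponents is calibrated by the borderline value $k=2$, for which $q=2$ and the displayed quantity is exactly the classical Mattila integral, finite precisely when $\alpha>\frac{d+1}{2}=d-\frac{d-1}{2}$. The main obstacle lies entirely at this final step: the elementary averaged energy bound $\int_{|\xi|\le R}|\widehat\mu|^2\,d\xi\lesssim R^{d-\alpha}$ is insufficient, and genuine pointwise-in-$r$ decay of the spherical averages is needed to reach the sharp exponent $d-\frac{d-1}{k}$. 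Finally, I note that finiteness of (\ref{convolution}) forces $\nu_\rho\in L^k(\mathbb{R}^d)$, hence $\nu_\rho$ absolutely continuous, for almost every $\rho\in O(d)$, which is exactly the hypothesis used to pass from (\ref{notquite}) to (\ref{nurho}) in the proof of the Theorem.
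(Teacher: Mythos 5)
First, the frame of comparison: the paper does not prove Lemma~\ref{lem:nurho} at all; it quotes it from \cite{bochen}, \S 3. Your setup is in the same spirit as that cited argument — Frostman measure, the identity $\widehat{\nu_\rho}(\xi)=\widehat\mu(\xi)\overline{\widehat\mu(\rho^{-1}\xi)}$, expansion of $\int\nu_\rho^k\,du$ over the hyperplane $\{\xi_1+\dots+\xi_k=0\}$, and a rotational average producing spherical averages — and your exponent bookkeeping (Young with $L^{k/(k-1)}$, H\"older on spheres, $q=\tfrac{k+2}{2(k-1)}$) is arithmetically correct. But the proof collapses at the two steps you treat as harmless: replacing the $L^k$ spherical average $\int_{S^{d-1}}|\widehat\mu(r\omega)|^k\,d\omega$ by the $L^2$ average $\sigma(r)$ via $|\widehat\mu|\le 1$, and severing the constraint $\sum_j\xi_j=0$ from the $\rho$-average via Young's inequality. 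For $k\ge 4$ one has $q\le 1$, and then your target integral is \emph{never} finite in the situations the lemma is about: since $\sigma(r)\le|S^{d-1}|$, for $q\le1$ we get $\sigma(r)^q\ge|S^{d-1}|^{q-1}\sigma(r)$, hence
\begin{align*}
\int_1^\infty\sigma(r)^q\,r^{d-1}\,dr\ \ge\ |S^{d-1}|^{q-1}\int_{|\xi|\ge1}|\widehat\mu(\xi)|^2\,d\xi\ =\ +\infty
\end{align*}
for every singular measure $\mu$ (if $\widehat\mu\in L^2$ then $\mu$ has an $L^2$ density, hence is absolutely continuous), and every measure carried by a compact set of Hausdorff dimension strictly between $d-\tfrac{d-1}{k}$ and $d$ — the entire point of the lemma — is singular. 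So for $k\ge4$ you have reduced a true statement to a false one, and no spherical-average decay estimate, however sharp, can close the gap from there; the loss is upstream of the step you flag as the ``crux.''

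The failure is structural, not a matter of missing decay estimates. The threshold is an energy threshold: what should emerge is (a power of) $I_{d-\frac{d-1}{k}}(\mu)=c\int|\widehat\mu(\xi)|^2|\xi|^{-\frac{d-1}{k}}\,d\xi$, with each frequency variable carrying a \emph{square} of $|\widehat\mu|$ against a weight; your H\"older-plus-Young decoupling instead leaves each frequency carrying a single power of $|\widehat\mu|$ and an $L^2$ spherical average, and for singular measures that combination is not integrable. Discarding $|\widehat\mu|^{k-2}$ forfeits exactly the gain in $k$ that the argument must extract. Even at $k=3$, where $q=\tfrac54>1$, the route falls short: bounding $\sigma(r)^{5/4}\le\sigma(r)\,(Cr^{-\beta})^{1/4}$ with $\beta$ the best available pointwise decay rate and comparing with $I_{d-\beta/4}(\mu)$ requires $\alpha>d-\beta/4$; since no measure on a set of dimension $\alpha'<d$ can satisfy $\sigma(r)\lesssim r^{-\beta}$ with $\beta>\alpha'$, this forces $\alpha>\tfrac{4d}{5}$, which exceeds $d-\tfrac{d-1}{3}$ for every $d\ge3$ (only $d=2$, $k=3$ reaches the stated threshold, and then only by invoking Wolff's sharp estimate). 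Two smaller errors: your parenthetical ``$\alpha>d-1$'' does not follow from $\alpha>d-\tfrac{d-1}{k}$ unless $k>d-1$, whereas the lemma assumes only $k\ge3$, $d\ge2$; and the Plancherel expansion of $\int\nu_\rho^k\,du$ needs a mollification argument, since $\nu_\rho\in L^k$ is not known a priori. Your closing observation, that (\ref{convolution}) forces absolute continuity of $\nu_\rho$ for almost every $\rho$, is correct and matches the paper's remark.
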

Lemma \ref{lem:nurho} is proven in \cite{bochen}, \S 3. Note that (\ref{convolution}) implies that $\nu_\rho$ is absolutely continuous for almost every $\rho$.

\section*{Acknowledgements}

I would like to thank my advisors, professor Alex Iosevich and professor Sevak Mkrtchyan for their guidance and helpful discussions during the course of my graduate studies at the University of Rochester. This work sprang from a question professor Alex Iosevich had posed to me.

\def\arraystretch{1.3}
\section*{List of Symbols}
\begin{tabular}{cp{1\textwidth}}
  $E(d)$ &  Group of Euclidean motions of $\mathbb{R}^d$. \\
  $O(d)$ & Group of rotations of $\mathbb{R}^d$. \\
  $\mathbb{RP}^{k-2}$ & Real projective space of dimension $k-2$. \\
  $\chi$ & Characteristic function of a set. \\
  $B(y,\epsilon)$ & Ball about $y\in\mathcal{C}^d_k$ of radius $\epsilon$. \\
  $C_c(\mathcal{C}^d_k)$ & Continuous functions $\mathcal{C}^d_k\to\mathbb{R}$ with compact support. \\
  $\oplus^k\mathbb{R}^d$ & Direct summand of $k$ copies of $\mathbb{R}^d$. \\
  $\|\cdot\|_{\oplus^k\mathbb{R}^d}$ & Standard Euclidean distance. \\
  $\operatorname{spt}\nu$ & Support of the measure $\nu$.
\end{tabular}

\end{document}